\newcommand{\Mdef}[2]{\newcommand{#1}{\relax \ifmmode #2 \else $#2$\fi}}
\newcommand{\sm }{\wedge}
\newcommand{\map}{\mathrm{map}}
\Mdef{\bhom}{\mathbf{\hat{H}om}}
\Mdef{\Mod}{\mathrm{mod}}
\newcommand{\st}{\; | \;}
\newtheorem{thm}{Theorem}[section]
\newtheorem{lemma}[thm]{Lemma}
\newtheorem{cor}[thm]{Corollary}
\theoremstyle{definition}
\newcommand{\qqed}{\qed \\[1ex]}
\renewenvironment{proof}[1][\hspace*{-.8ex}]{\noindent {\bf Proof #1:\;}}{\qqed}
\Mdef{\PH} {\Phi^H}
\Mdef{\PK} {\Phi^K}
\Mdef{\PL} {\Phi^L}
\Mdef{\PT} {\Phi^{\T}}
\Mdef{\ef}{E{\cF}_+}
\Mdef{\etf}{\widetilde{E}{\cF}}
\Mdef{\eg}{E{G}_+}
\Mdef{\etg}{\tilde{E}{G}}
\Mdef{\infl}{\mathrm{inf}}
\Mdef{\defl}{\mathrm{def}}
\Mdef{\res}{\mathrm{res}}
\Mdef{\ind}{\mathrm{ind}}
\Mdef{\coind}{\mathrm{coind}}
\Mdef{\univ}{\mathcal{U}}
\Mdef{\Fp}{\mathbb{F}_p}
\Mdef{\Zpinfty}{\Z /p^{\infty}}
\Mdef{\Zpadic}{\Z_p^{\wedge}}
\newcommand{\bi}{\begin{itemize}}
\newcommand{\be}{\begin{enumerate}}
\newcommand{\bc}{\begin{center}}
\newcommand{\bd}{\begin{description}}
\newcommand{\ei}{\end{itemize}}
\newcommand{\ee}{\end{enumerate}}
\newcommand{\ec}{\end{center}}
\newcommand{\ed}{\end{description}}
\newcommand{\lra}{\longrightarrow}
\Mdef{\we}{\mathbf{we}}
\Mdef{\fib}{\mathbf{fib}}
\Mdef{\cof}{\mathbf{cof}}
\Mdef{\BI}{\mathcal{BI}}
\Mdef{\B}{\mathbb{B}}
\Mdef{\C}{\mathbb{C}}
\Mdef{\D}{\mathbb{D}}
\Mdef{\E}{\mathbb{E}}
\Mdef{\T}{\mathbb{T}}
\Mdef{\F}{\mathbb{F}}
\Mdef{\G}{\mathbb{G}}
\Mdef{\I}{\mathbb{I}}
\Mdef{\N}{\mathbb{N}}
\Mdef{\Q}{\mathbb{Q}}
\Mdef{\R}{\mathbb{R}}
\Mdef{\bbS}{\mathbb{S}}
\Mdef{\Z}{\mathbb{Z}}
\Mdef{\bA}{\mathbb{A}}
\Mdef{\bB}{\mathbb{B}}
\Mdef{\bC}{\mathbb{C}}
\Mdef{\bD}{\mathbb{D}}
\Mdef{\bE}{\mathbb{E}}
\Mdef{\bF}{\mathbb{F}}
\Mdef{\bG}{\mathbb{G}}
\Mdef{\bH}{\mathbb{H}}
\Mdef{\bI}{\mathbb{I}}
\Mdef{\bJ}{\mathbb{J}}
\Mdef{\bK}{\mathbb{K}}
\Mdef{\bL}{\mathbb{L}}
\Mdef{\bM}{\mathbb{M}}
\Mdef{\bN}{\mathbb{N}}
\Mdef{\bO}{\mathbb{O}}
\Mdef{\bP}{\mathbb{P}}
\Mdef{\bQ}{\mathbb{Q}}
\Mdef{\bR}{\mathbb{R}}
\Mdef{\bS}{\mathbb{S}}
\Mdef{\bT}{\mathbb{T}}
\Mdef{\bU}{\mathbb{U}}
\Mdef{\bV}{\mathbb{V}}
\Mdef{\bW}{\mathbb{W}}
\Mdef{\bX}{\mathbb{X}}
\Mdef{\bY}{\mathbb{Y}}
\Mdef{\bZ}{\mathbb{Z}}
\Mdef{\cA}{\mathcal{A}}
\Mdef{\cB}{\mathcal{B}}
\Mdef{\cC}{\mathcal{C}}
\Mdef{\mcD}{\mathcal{D}} 
\Mdef{\cE}{\mathcal{E}}
\Mdef{\cF}{\mathcal{F}}
\Mdef{\cG}{\mathcal{G}}
\Mdef{\mcH}{\mathcal{H}} 
\Mdef{\cI}{\mathcal{I}}
\Mdef{\cJ}{\mathcal{J}}
\Mdef{\cK}{\mathcal{K}}
\Mdef{\mcL}{\mathcal{L}}
\Mdef{\cM}{\mathcal{M}}
\Mdef{\cN}{\mathcal{N}}
\Mdef{\cO}{\mathcal{O}}
\Mdef{\cP}{\mathcal{P}}
\Mdef{\cQ}{\mathcal{Q}}
\Mdef{\mcR}{\mathcal{R}}
\Mdef{\cS}{\mathcal{S}}
\Mdef{\cT}{\mathcal{T}}
\Mdef{\cU}{\mathcal{U}}
\Mdef{\cV}{\mathcal{V}}
\Mdef{\cW}{\mathcal{W}}
\Mdef{\cX}{\mathcal{X}}
\Mdef{\cY}{\mathcal{Y}}
\Mdef{\cZ}{\mathcal{Z}}
\Mdef{\ca}{\mathcal{a}}
\Mdef{\ct}{\mathcal{t}}
\Mdef{\At}{\tilde{A}}
\Mdef{\Bt}{\tilde{B}}
\Mdef{\Ct}{\tilde{C}}
\Mdef{\Ht}{\tilde{H}}
\Mdef{\Kt}{\tilde{K}}
\Mdef{\Lt}{\tilde{L}}
\Mdef{\Mt}{\tilde{M}}
\Mdef{\Nt}{\tilde{N}}
\Mdef{\Pt}{\tilde{P}}
\Mdef{\tA}{\tilde{A}}
\Mdef{\tB}{\tilde{B}}
\Mdef{\tC}{\tilde{C}}
\Mdef{\tE}{\tilde{E}}
\Mdef{\tH}{\tilde{H}}
\Mdef{\tK}{\tilde{K}}
\Mdef{\tL}{\tilde{L}}
\Mdef{\tM}{\tilde{M}}
\Mdef{\tN}{\tilde{N}}
\Mdef{\tP}{\tilde{P}}
\Mdef{\ft}{\tilde{f}}
\Mdef{\xt}{\tilde{x}}
\Mdef{\yt}{\tilde{y}}
\Mdef{\Ab}{\overline{A}}
\Mdef{\Bb}{\overline{B}}
\Mdef{\Cb}{\overline{C}}
\Mdef{\Db}{\overline{D}}
\Mdef{\Eb}{\overline{E}}
\Mdef{\Fb}{\overline{F}}
\Mdef{\Gb}{\overline{G}}
\Mdef{\Hb}{\overline{H}}
\Mdef{\Ib}{\overline{I}}
\Mdef{\Jb}{\overline{J}}
\Mdef{\Kb}{\overline{K}}
\Mdef{\Lb}{\overline{L}}
\Mdef{\Mb}{\overline{M}}
\Mdef{\Nb}{\overline{N}}
\Mdef{\Ob}{\overline{O}}
\Mdef{\Pb}{\overline{P}}
\Mdef{\Qb}{\overline{Q}}
\Mdef{\Rb}{\overline{R}}
\Mdef{\Sb}{\overline{S}}
\Mdef{\Tb}{\overline{T}}
\Mdef{\Ub}{\overline{U}}
\Mdef{\Vb}{\overline{V}}
\Mdef{\Wb}{\overline{W}}
\Mdef{\Xb}{\overline{X}}
\Mdef{\Yb}{\overline{Y}}
\Mdef{\Zb}{\overline{Z}}
\Mdef{\db}{\overline{d}}
\Mdef{\hb}{\overline{h}}
\Mdef{\qb}{\overline{q}}
\Mdef{\rb}{\overline{r}}
\Mdef{\tb}{\overline{t}}
\Mdef{\ub}{\overline{u}}
\Mdef{\vb}{\overline{v}}
\Mdef{\hc}{\hat{c}}
\Mdef{\he}{\hat{e}}
\Mdef{\hf}{\hat{f}}
\Mdef{\hA}{\hat{A}}
\Mdef{\hH}{\hat{H}}
\Mdef{\hJ}{\hat{J}}
\Mdef{\hM}{\hat{M}}
\Mdef{\hP}{\hat{P}}
\Mdef{\hQ}{\hat{Q}}
\Mdef{\thetab}{\overline{\theta}}
\Mdef{\phib}{\overline{\phi}}
\Mdef{\uA}{\underline{A}}
\Mdef{\uB}{\underline{B}}
\Mdef{\uC}{\underline{C}}
\Mdef{\uD}{\underline{D}}
\Mdef{\bolda}{\mathbf{a}}
\Mdef{\boldb}{\mathbf{b}}
\Mdef{\bfD}{\mathbf{D}}
\Mdef{\fm}{\frak{m}}
\Mdef{\fp}{\frak{p}}
\Mdef{\eps}{\epsilon}
\newcommand{\up}{\mathsf{V}}
\newcommand{\EEinftyG}{{\mathbb E}_{\infty}^G}
\newcommand{\EEinfty}{{\mathbb E}_{\infty}}
\newcommand{\EinftyG}{E_{\infty}^G}
\newcommand{\Einfty}{E_{\infty}}
\newcommand{\Ninfty}{N_{\infty}}
\newcommand{\GS}{G\times \Sigma_n}
\newcommand{\Et}{\tilde{E}}
\begin{document}
\title{Couniversal spaces which are  equivariantly commutative ring spectra}

\author{J.P.C.Greenlees}
\address{School of Mathematics and Statistics, Hicks Building, 
Sheffield S3 7RH. UK.}
\email{j.greenlees@sheffield.ac.uk}
\date{}

\begin{abstract}
We identify  which  which couniversal spaces have
suspension spectra equivalent to commutative orthogonal ring
$G$-spectra for a compact Lie group $G$. These are precisely those whose cofamily is closed under
passage to finite index subgroups. Equivalently these are the 
couniversal spaces admitting an action of an 
 $\EinftyG$-operad.   
\end{abstract}

\thanks{I am grateful to M.Hill and M.Kedziorek for the conversation
  at EuroTalbot17 when we observed that we knew of no obstruction to
  Corollary \ref{cor:main}.  }
\maketitle

\tableofcontents

\section{Introduction}
For a compact Lie group $G$, Theorem \ref{thm:main} shows that a number of simple
$G$-equivariant homotopy types
have suspension spectra which are commutative orthogonal ring
$G$-spectra.  
Because equivariant commutativity implies a large amount of additional
structure, including norm maps, this has significant implications. 

These homotopy types are naturally used for
isotropic decompositions of the sphere, and as such they  play a
significant role in understanding the structure of $G$-equivariant
spectra where $G$ is a torus in \cite{tnqcore}. That analysis involves
constructing the model category of
rational $G$-spectra for a torus $G$ from a diagram of much simpler
model categories. The simplest way to do this is to construct the
simpler model categories as categories of modules over commutative ring $G$-spectra,
and for this diagram to arise from a diagram of commutative ring $G$-spectra. 

The homotopy types of the ring $G$-spectra are apparent from the construction, and it remains
to show that they are indeed commutative ring $G$-spectra. If the
ambient category of $G$-spectra is the category of orthogonal spectra,
the commutative monoids admit multiplicative norm maps, which is a
substantial restriction on the homotopy type. Accordingly, \cite{tnqcore} works instead with
the Blumberg-Hill category of orthogonal  $\mathcal{L}$-spectra \cite{BlumbergHillL1}, where 
 many more $G$-spectra admit the structure of commutative rings. 
 
The motivating application of the present note is to show that in fact the ring
spectra required in the construction of  \cite{tnqcore} can be represented by commutative
rings in the category orthogonal $G$-spectra. It follows that the the argument of
\cite{tnqcore} can be conducted directly in the category of orthogonal
$G$-spectra rather than in the more elaborate category of spectra with an
$\mathcal{L}$-action.

\section{Operadic preliminaries}

There are rare examples of spectra which are obviously strictly 
commutative rings, but it is much more usual to show that a spectrum 
admits the action of a suitable operad, and then use general results 
to show this means the homotopy type is represented by a ring 
spectrum. 

\subsection{$\Ninfty$-operads}
In the equivariant world there is a range of essentially different
operads governing commutative ring spectra: these are the
$\Ninfty$-operads of Blumberg-Hill \cite{BlumbergHillNorms}. These are operads $\cO$
in $G$-spaces whose $n$-th term $\cO(n)$ is a universal space for a
family  $\cF \cO (n)$ of subgroups of $\GS$: it is essential that $\cO
(n)$ is $G$-fixed and $\Sigma_n$-free, but within that class there is
a wide range of options. We need only discuss the two extreme types of $\Ninfty$ $G$-operads. 

At one extreme we have the non-equivariant $\Einfty$-operads, which
are as free as possible whilst being  $G$-fixed. Equivalently, 
the $n$-th term is the universal space for the family 
$$\cF (n)=\{ H\times 1 \subseteq \GS \st H\subseteq G\}. $$ 
There are of course many $\Einfty$-operads, and we write 
$\EEinfty$ for a chosen one. For example we might use 
 the linear isometries 
operad on a $G$-fixed universe, but we will use  no special  properties of the 
operad. 

At the other extreme we have the  $\EinftyG$-operads which are as
fixed as possible whilst their $n$th term is $\Sigma_n$-free, so their
$n$-th term is  a universal space for the family 
$$\cF_G(n) =\{ \Gamma \st \Gamma \cap \Sigma_n=1\}.$$ 
There are of course many $\EinftyG$-operads, and we
write $\EEinftyG$ for a chosen one. For example we might use
 the linear isometries 
operad on a complete $G$-universe, but we will use no special properties of the 
operad. 
We pause to recall that  if $\Gamma \cap \Sigma_n=1$ then $\Gamma$  is a `graph subgroup' in the sense 
that  we have $\Gamma =\Gamma(L, \alpha)$ for some subgroup 
$L$ of $G$ and some homomorphism $\alpha : L \lra \Sigma_n$, where 
$\Gamma (L, \alpha)=\{ (x , \alpha (x))\st x \in L\}$. 

\subsection{Commutative monoids and $\EinftyG$-operads}
The relevance of $\EinftyG$-operads is the connection to the standard
symmetric monoidal product of spectra. 

\begin{lemma}
The commutative monoids in the category of orthogonal $G$ spectra are
the   $\EinftyG$-algebras.
\end{lemma}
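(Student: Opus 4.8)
The plan is to show mutual containment of the two classes of objects by identifying, for a commutative monoid $R$, an explicit $\EEinftyG$-algebra structure, and conversely by showing that any $\EEinftyG$-algebra structure on a spectrum forces it to be a commutative monoid up to the usual weak equivalences. The key structural fact I will exploit is the description of $\EEinftyG(n)$ as the universal space for the family $\cF_G(n) = \{\Gamma \st \Gamma \cap \Sigma_n = 1\}$, together with the observation recalled in the excerpt that every such $\Gamma$ is a graph subgroup $\Gamma(L,\alpha)$.

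\emph{From commutative monoids to $\EEinftyG$-algebras.} First I would recall that the $n$-th structure map of the symmetric monoidal product on orthogonal $G$-spectra endows a commutative monoid $R$ with maps $R^{\wedge n} \lra R$ that are invariant under the $\Sigma_n$-action permuting the smash factors. The point is that the symmetric monoidal smash product of orthogonal $G$-spectra already encodes a genuine (norm-carrying) multiplication: the relevant operad of the smash product is precisely the one whose $n$-th space records isometries of a complete $G$-universe equivariantly, and this is a model for $\EEinftyG(n)$. Concretely, I would produce action maps
\[
\EEinftyG(n)_+ \sm_{\Sigma_n} R^{\wedge n} \lra R
\]
by first collapsing $\EEinftyG(n)_+$ to $S^0$ (using that it is $G$-fixed in the relevant sense) and then applying the iterated multiplication of $R$; compatibility with the operad composition follows from associativity and commutativity of the monoid multiplication.

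\emph{From $\EEinftyG$-algebras to commutative monoids.} For the converse, the main content is that the $\EEinftyG$-operad is \emph{equivalent} to the commutative operad in the appropriate equivariant sense: because $\cF_G(n)$ contains every graph subgroup, the space $\EEinftyG(n)$ is a universal space built from exactly the isotropy data carried by the norm structure of the genuine smash product. I would invoke the general rectification principle that an algebra over an operad weakly equivalent to the commutative operad can be rectified to a strictly commutative monoid, the key input being that $\EEinftyG(n)/\Sigma_n$ is $G$-equivariantly contractible precisely because the family $\cF_G(n)$ is the maximal family meeting the condition $\Gamma \cap \Sigma_n = 1$. This maximality is what distinguishes the $\EEinftyG$-case from a general $\Ninfty$-operad and is exactly what matches the norm maps present in orthogonal $G$-spectra.

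\emph{Expected obstacle.} The hard part will be the rectification step in the second direction: passing from a homotopy-coherent $\EEinftyG$-action to a strict commutative monoid requires knowing that the forgetful functor from commutative monoids to $\EEinftyG$-algebras induces an equivalence of homotopy categories, which rests on the model-categorical comparison of operads and on the fact that $\EEinftyG$ carries exactly the admissible norms realized by the orthogonal smash product. I expect the cleanest route is to cite the Blumberg--Hill identification \cite{BlumbergHillNorms} of the admissible sets of an $\Ninfty$-operad with its indexing system, and to observe that the indexing system of $\EEinftyG$ is the complete one, matching that of the commutative monoid structure on orthogonal $G$-spectra; the equivalence of categories then follows formally.
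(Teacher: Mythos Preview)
Your overall strategy has the right shape: the forward direction is just pulling back along the operad map $\EEinftyG \to \mathrm{Com}$, and the reverse direction is a rectification argument. But the technical claim you rest on for rectification is false: the quotient $\EEinftyG(n)/\Sigma_n$ is \emph{not} $G$-equivariantly contractible. For a subgroup $H\subseteq G$, its $H$-fixed points are, up to homotopy, the set of $\Sigma_n$-conjugacy classes of homomorphisms $H\to\Sigma_n$ (equivalently, isomorphism classes of $H$-sets of cardinality $n$), which is typically disconnected. So $\EEinftyG\to\mathrm{Com}$ is not an equivalence of operads in $G$-spaces, and you cannot deduce rectification from that alone.

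The actual mechanism, which is all the paper's proof invokes, is a different lemma: for positive cofibrant orthogonal $G$-spectra $X$, the map
\[
\EEinftyG(n)_+ \sm_{\Sigma_n} X^{\sm n} \lra X^{\sm n}/\Sigma_n
\]
is a weak equivalence. This is \cite[B.117]{HHR}, correcting \cite[III.8.4]{MMorthogonal}, and it is this statement---about smash powers, not about the operad quotient on its own---that feeds into the rectification argument of \cite[15.5]{MMSS}. The subtlety is that the $\Sigma_n$-action on $X^{\sm n}$ is sufficiently free in the positive model structure that the universal space becomes redundant \emph{after smashing}, even though it is not redundant by itself. Your appeal to \cite{BlumbergHillNorms} for indexing systems is pertinent background but does not supply this key lemma. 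Finally, the paper notes that \cite{HHR} only states B.117 for finite $G$, and one must observe that the proof goes through for compact Lie groups; your sketch does not address this point.
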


\begin{proof}
This uses the traditional argument of
\cite[15.5]{MMSS}, using \cite[B.117]{HHR}, which in turn corrects 
\cite[III.8.4]{MMorthogonal}. We note that the statement in \cite{HHR} is only
given for finite groups, but the argument applies as written to arbitrary
compact Lie groups, giving the full replacement for the statement in  \cite{MMorthogonal}. 
\end{proof}

\subsection{Endomorphism operads}
The other piece of standard material is to consider the endomorphism
operad $\cE_Y$ on a based space  $Y$, defined by
$$\cE_Y(n)=Map_*(Y^{\sm n }, Y). $$
We automatically find $Y$ is an $\cE_Y$-algebra. Equally, if $Y$ is a
based $G$-space $\cE_Y$ is an operad in $G$-spaces and $Y$ is an algebra
over it.

\section{McClure's argument}
McClure \cite{McClureTate} argued as follows to construct an $\Einfty$-operad acting on
$\Et G$. 

First we consider the endomorphism operad $\cE_{\Et G}$, and then note that passage to fixed
points gives a map 
$$\phi(n): \cE_{\Et G}(n)^G=Map_*^G(\Et G^{\sm n}, \Et G)\lra Map_*(S^0,
S^0). $$
We write 
$$D_{McC}(n)=\phi(n)^{-1}(id), $$
and note that this is also an operad acting on $\Et G$. Because $\phi
(n)$ is a weak equivalence $D_{McC}(n)$ is contractible, so that 
$\EEinfty \times D_{McC}$ is an $\Einfty$-operad acting on $\Et G$ as required. 

\section{Generalizing McClure's argument}

\subsection{Couniversal spaces}
Given a group $G$ and a family $\cF$ of subgroups of $G$, 
we  say that $\Et \cF=S^0*E\cF$ is {\em the couniversal space} for the
complementary cofamily 
$All \setminus \cF$.  Simplifying notation, for a  cofamily $\cC$,
we write simply 
$$E\cC =\Et (\cC^c). $$
This has two essential features: it has geometric isotropy $\cC$, and
$(E\cC)^H=S^0$ whenever $H\in \cC$.

\subsection{The endomorphism operad of a cofamily}
\label{subsec:counivoperad}
We consider the endomorphism operad of $E\cC$:  
$$\cE_{E\cC}(n) = Map_*(E\cC^{\sm n},E\cC). $$
The following partial information about the homotopy type of this
space will be useful later.

\begin{lemma}
\label{lem:easyendo}
Given cofamilies $\cC$ and $\mcD$ the space 
$$\map_* (E\cC ,E\mcD )$$
has the following properties
\begin{itemize}
\item It is $H$-contractible if $H\not \in \cC \cap \mcD$
\item It is $H$-couniversal if no subgroup of $H$ lies in $\mcD
  \setminus \cC$
\end{itemize}
\end{lemma}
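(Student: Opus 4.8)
The plan is to compute the fixed-point spaces $\map_*(E\cC,E\mcD)^K=\map^K_*(E\cC,E\mcD)$ for all $K\subseteq H$ and read off the two conclusions, using throughout that a cofamily is closed under passage to \emph{larger} subgroups (and to conjugates) while the complementary family is closed under passage to \emph{smaller} subgroups. Two monotonicity remarks organise everything. Since $\cC\cap\mcD$ is again an upward-closed cofamily, the hypothesis $H\notin\cC\cap\mcD$ forces $K\notin\cC\cap\mcD$ for every $K\subseteq H$; so for the first bullet it suffices to prove $\map^K_*(E\cC,E\mcD)\simeq *$ whenever $K\notin\cC\cap\mcD$, and the $H$-contractibility of the $G$-space then follows. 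Likewise the second hypothesis (no subgroup of $H$ lies in $\mcD\setminus\cC$) descends to every $K\subseteq H$, so it too suffices to treat one subgroup at a time.

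I would use two inputs. First, the defining cofibre sequence $E\cF_+\to S^0\to E\cC$ with $\cF=\cC^c$; applying $\map_*(-,E\mcD)$ converts it into a fibre sequence and gives an equivalence $\map_*(E\cC,E\mcD)\simeq\fibre\bigl(E\mcD\xrightarrow{\rho}\map(E\cF,E\mcD)\bigr)$, where $\rho$ is restriction along $E\cF\to *$. Second, the standard cellular fact: if $X$ is a $K$-CW complex all of whose cells have orbit type $K/J$ with $J$ in some set $\cS$ of subgroups of $K$, and $Z^J\simeq *$ for every $J\in\cS$, then $\map^K(X,Z)\simeq *$; this is a skeletal induction built on $\map^K_*(K/J_+\sm S^n,Z)=\Omega^n Z^J$.

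The first bullet is then immediate. If $K\notin\cC$ then $K\in\cF$, and downward closure of $\cF$ gives $(E\cC)^L\simeq *$ for all $L\subseteq K$, so $E\cC$ is $K$-contractible and any based mapping space out of it is contractible. If instead $K\notin\mcD$, then upward closure of $\mcD$ gives $(E\mcD)^L\simeq *$ for all $L\subseteq K$, so $E\mcD$ is $K$-contractible and any based mapping space into it is contractible. Every $K$ with $K\notin\cC\cap\mcD$ falls into one of these two cases, so $\map_*(E\cC,E\mcD)$ is $H$-contractible. For the second bullet I would restrict to $K\subseteq H$: then $E\cF|_K=E(\cF\cap\mathrm{Sub}(K))$ is a $K$-CW complex with cells of orbit type $K/J$ for $J\subseteq K$, $J\in\cF$ (equivalently $J\notin\cC$). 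The hypothesis says $J\in\mcD\Rightarrow J\in\cC$ for $J\subseteq H$, so each such $J$ has $J\notin\mcD$ and $(E\mcD)^J\simeq *$. The cellular fact gives $\map(E\cF,E\mcD)^K\simeq *$, hence $\rho^K$ has contractible target and $\map^K_*(E\cC,E\mcD)\simeq(E\mcD)^K$, which is $S^0$ for $K\in\mcD$ and contractible otherwise. Thus the natural map $\map_*(E\cC,E\mcD)\to E\mcD$ induced by $S^0\to E\cC$ is an $H$-equivalence, exhibiting the source as $H$-couniversal with cofamily $\mcD\cap\mathrm{Sub}(H)$.

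The main obstacle is the second input and its bookkeeping: one must restrict to $K$ \emph{before} running the cellular induction, so that the orbit types are literally $K/J$ with $J\subseteq K$ and no conjugation corrections intervene in identifying the building blocks with the fixed spaces $(E\mcD)^J$ (a naive computation of $K$-fixed points via $G$-cells would introduce intersections $K\cap{}^gJ$ whose membership in $\mcD$ is not controlled by the hypothesis). The remaining care is only that $E\cC$ and $E\mcD$ carry the $G$-CW homotopy types needed for the equivariant obstruction argument and for the final Whitehead-type identification.
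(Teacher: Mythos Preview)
Your proof is correct and follows essentially the same approach as the paper. Both arguments handle the first bullet by observing that one of $E\cC$, $E\mcD$ is $H$-contractible, and handle the second bullet via the cofibre sequence $E\cF_+\to S^0\to E\cC$: the paper phrases this as an obstruction-theoretic computation of $[E\cF_+\sm S^k, E\mcD]^H$, while you package the same computation as a fibre sequence together with a skeletal induction on $E\cF$, but the substance is identical and your version is simply more explicit about restricting to $K\subseteq H$ before running the argument.
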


\begin{proof}
It is clear that if $H$ is not in $\cC \cap \mcD$ then $\map_*(E\cC, 
E\mcD)$ is $H$-contractible, since one or other of the spaces is.

If $H \in \cC\cap \mcD$ we wish to argue that the map 
$$\map_*(E\cC , E\mcD)^H \lra \map_*(S^0, S^0)=S^0$$
is an equivalence. In other words, that any $H$-map $f: E\cC \lra 
E\mcD$ is determined by the map from $S^0\lra E\mcD$. The obstruction
to extension and uniqueness lie in $[E\cC^c_+\sm S^k, E\mcD]^H$,
which vanishes unless $H$ has a subgroup $K \in \mcD \setminus \cC$.
\end{proof}

\subsection{The couniversal operad of a cofamily}

There is a $\GS$-map $i_n: S^0=(S^0)^{\sm n} \lra (E\cC)^{\sm n}$ inducing
a $\GS$-map 
$$i_n^*: \cE_{E\cC} (n)  = Map_*(E\cC^{\sm n},E\cC)\lra
Map_*(S^0,E\cC)=E\cC . $$
We take
$$D\cC (n)=(i_n^*)^{-1}(i_1).   $$
We note that when $\cC={\mathcal{NT}}$ consists of the non-trivial
subgroups the fixed point set $D{\mathcal{NT}}^G=D_{McC}$ is
McClure's operad. 

\begin{lemma}
\label{lem:DCactsonEC}
$D\cC$ is an operad acting on $E\cC$. \qqed
\end{lemma}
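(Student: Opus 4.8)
The plan is to observe that $D\cC$ is defined as a levelwise subspace of the endomorphism operad $\cE_{E\cC}$, which by the discussion of endomorphism operads is already an operad with $E\cC$ as an algebra. It therefore suffices to check that the subspaces $D\cC(n) = (i_n^*)^{-1}(i_1)$ assemble into a \emph{sub}-operad: they must contain the unit, be preserved by the $\Sigma_n$-actions, and be closed under the operadic structure maps $\gamma$. Once this is established, the tautological action of $\cE_{E\cC}$ on $E\cC$ restricts to an action of $D\cC$, and all operad and algebra axioms are inherited for free from those of $\cE_{E\cC}$.

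The key structural input is that the basepoint inclusions are, by construction, smash powers of $i_1\colon S^0\to E\cC$: we have $i_n = (i_1)^{\sm n}$ under the identification $S^0 = (S^0)^{\sm n}$, and consequently they respect the block decomposition associated to a composition $j = j_1 + \cdots + j_n$, namely $i_j = i_{j_1}\sm \cdots \sm i_{j_n}$. Using this, I would verify closure under $\gamma$ directly. Given $f \in D\cC(n)$ and $g_k \in D\cC(j_k)$, the composite $\gamma(f; g_1, \ldots, g_n) = f \circ (g_1 \sm \cdots \sm g_n)$ satisfies
$$i_j^*\big(\gamma(f; g_1, \ldots, g_n)\big) = f \circ (g_1 \sm \cdots \sm g_n)\circ (i_{j_1}\sm \cdots \sm i_{j_n}) = f \circ \big((g_1\circ i_{j_1})\sm \cdots \sm (g_n\circ i_{j_n})\big).$$
Since $g_k \circ i_{j_k} = i_{j_k}^*(g_k) = i_1$ by hypothesis, the inner smash is exactly $i_1\sm \cdots \sm i_1 = i_n$, so the right-hand side equals $f\circ i_n = i_n^*(f) = i_1$. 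Hence $\gamma(f; g_1,\ldots,g_n)\in D\cC(j)$, as required.

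The remaining checks are routine and I would dispatch them quickly. The unit $\mathrm{id}_{E\cC}\in \cE_{E\cC}(1)$ lies in $D\cC(1)$ because $i_1^*(\mathrm{id}_{E\cC}) = \mathrm{id}_{E\cC}\circ i_1 = i_1$. For the symmetric group action, permuting smash factors fixes $i_n = (i_1)^{\sm n}$, i.e.\ $\sigma\circ i_n = i_n$, so $i_n^*(\sigma\cdot f) = i_n^*(f)$ and $D\cC(n)$ is $\Sigma_n$-stable; the same $\GS$-equivariance of $i_n$ shows each $D\cC(n)$ is a $\GS$-subspace of $\cE_{E\cC}(n)$. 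The only point demanding genuine care --- and hence the main obstacle --- is the bookkeeping in the displayed computation: one must confirm that the distinguished element $i_1\in E\cC$ and the inclusions $i_n$ interact correctly with the smash products and basepoint conventions, so that the identities $i_n = (i_1)^{\sm n}$ and $i_j = i_{j_1}\sm\cdots\sm i_{j_n}$ hold on the nose rather than merely up to the coherence isomorphisms of the smash product. Granting this, closure under $\gamma$ is forced, and the specialization to $\cC = {\mathcal{NT}}$ on $G$-fixed points recovers McClure's operad $D_{McC}$.
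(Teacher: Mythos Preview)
Your proposal is correct and is exactly the argument the paper has in mind: the lemma is stated with an immediate \qqed and no displayed proof, so the intended content is precisely the routine verification that $D\cC$ is a suboperad of $\cE_{E\cC}$, which you carry out in full. Your check that $i_j^*\gamma(f;g_1,\ldots,g_n)=i_1$ via $i_j=i_{j_1}\sm\cdots\sm i_{j_n}$ and $g_k\circ i_{j_k}=i_1$ is the right computation, and the unit and $\Sigma_n$-stability checks are likewise the standard ones.
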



Using this,  we will show that for suitable cofamilies $\cC$, the space $E\cC$ 
is an algebra over an $\Ninfty$-operad with more highly structured
algebras than $\EEinfty$.



\subsection{Permutation powers and cofamilies}
Let us think of the symmetric group $\Sigma_n$ as the permutations of
$\{ 1, 2, \ldots, n\}$. We consider the group $\GS$ and 
let $p:\GS \lra \Sigma_n$ and $\pi: \GS \lra G$ be the projections. 

If $\cC$ is a cofamily of subgroups of $G$, we view $E\cC$  as a trivial
$\Sigma_{n-1}$-space and form the $n$th smash power  $(E\cC)^{\sm n}$
and view it as a $\GS$-space.

\begin{lemma}
The $\GS$-space $E\cC^{\sm n}$ is couniversal. 
\end{lemma}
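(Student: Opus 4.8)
The plan is to compute the $\Gamma$-fixed points of $E\cC^{\sm n}$ directly for every subgroup $\Gamma \subseteq \GS$, and then to check that the subgroups with non-contractible fixed points form a cofamily of $\GS$; a $\GS$-space with this fixed-point behaviour is by definition couniversal. Throughout I use the $\GS$-action in which $G$ acts diagonally (through its action on each smash factor of $E\cC$) and $\Sigma_n$ permutes the factors.

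First I would record the standard fixed-point formula for a smash power. Given $\Gamma \subseteq \GS$, let $\Gamma$ act on $\{1,\dots,n\}$ through $p$, with orbits $O_1,\dots,O_k$ and chosen representatives $i_1,\dots,i_k$. Writing $H_j=\pi(\mathrm{Stab}_\Gamma(i_j))\subseteq G$, a coordinate-by-coordinate analysis shows that a point $(x_1,\dots,x_n)$ is $\Gamma$-fixed precisely when, for each orbit, the representative coordinate lies in $(E\cC)^{H_j}$ and the remaining coordinates are propagated by the action (well-definedness of the propagation being exactly the $H_j$-fixedness). This yields
\[
\left( E\cC^{\sm n}\right)^\Gamma \iso \bigwedge_{j=1}^{k} (E\cC)^{H_j}.
\]
Since fixed points of a based $G$-CW complex commute with finite smash powers on the point-set level, there are no derived subtleties to worry about here.

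Next I would feed in the two essential features of $E\cC$: $(E\cC)^H=S^0$ when $H\in\cC$, and $(E\cC)^H$ is contractible otherwise. Smashing, the product $\bigwedge_j (E\cC)^{H_j}$ is $S^0$ when every $H_j\in\cC$ and is contractible as soon as one $H_j\notin\cC$. Hence $(E\cC^{\sm n})^\Gamma$ is always either $S^0$ or contractible, and it is $S^0$ exactly when $\pi(\mathrm{Stab}_\Gamma(i))\in\cC$ for every $i$; this condition is independent of the orbit representatives, since within an orbit the point-stabilisers are $\Gamma$-conjugate, so their $\pi$-images are $G$-conjugate and $\cC$ is conjugation-closed.

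It remains — and this is the real content — to check that $\cC':=\{\Gamma\subseteq\GS \st \pi(\mathrm{Stab}_\Gamma(i))\in\cC \text{ for all } i\}$ is a cofamily, i.e. closed under conjugation and under passage to larger subgroups. Conjugation-closure is formal: conjugating $\Gamma$ by $(g_0,\tau)$ permutes the indices by $\tau$ and conjugates each stabiliser image by $g_0$, landing back in the conjugation-closed $\cC$. The key point is closure under overgroups: if $\Gamma\subseteq\Gamma'$ then $\mathrm{Stab}_\Gamma(i)\subseteq\mathrm{Stab}_{\Gamma'}(i)$, whence $\pi(\mathrm{Stab}_\Gamma(i))\subseteq\pi(\mathrm{Stab}_{\Gamma'}(i))$, and since $\cC$ is itself a cofamily (so closed under overgroups) membership is preserved even as orbits merge. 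I expect this step — matching the growth of point-stabilisers under enlarging $\Gamma$ against the overgroup-closure of $\cC$ — to be the crux; the fixed-point formula and the conjugation bookkeeping are routine. With $\cC'$ a cofamily and the computation above, $E\cC^{\sm n}$ has exactly the fixed points of $E\cC'$ and is therefore couniversal.
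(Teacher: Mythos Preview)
Your proof is correct and follows essentially the same route as the paper: both establish the fixed-point homeomorphism $(E\cC^{\sm n})^{\Gamma}\cong\bigwedge_j (E\cC)^{\pi(\mathrm{Stab}_\Gamma(i_j))}$ and then observe that each factor is $S^0$ or contractible. The only difference is emphasis: you carefully verify that the resulting isotropy set is a cofamily (closure under conjugation and overgroups, using that $\cC$ itself is a cofamily), whereas the paper simply declares this step ``obvious'' --- your added detail is sound and does no harm.
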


\begin{proof}
Consider any $G$-space $X$ and form the $\GS$-space $X^{\sm n}$.
We will consider fixed points under  a subgroup
 $\Delta \subseteq \GS$. 

Consider the orbits $o_1, \ldots , o_s$ of
 $\{1, \ldots , n\}$ under $p(\Delta)$, and choose orbit
representatives $d_i\in o_i$. Now write
$\Delta_i=p^{-1}((\Sigma_n)_{d_i})\cap \Delta$ for the subgroup of 
$\Delta$ fixing $d_i$. 

We then see that there is a homeomorphism 
$$h: \bigwedge_{i=1}^s X^{\pi (\Delta_i)}\stackrel{\cong}\lra (X^{\wedge
  n})^{\Delta}. $$
The $i$th factor in the domain gives the $d_i$th coordinate in
$X^{\wedge n}$ and hence determines the coordinates in $o_i$. 
More precisely, if $m\in o_i$ we may choose $\delta \in \Delta$ 
with $p(\delta)(d_i)=m$, and then 
$$h(x_1\sm \ldots \sm x_s)_m=\pi (\delta ) x_i. $$
Since $x_i$ is fixed by $\Delta_i$ this is independent of the choice
of $\delta$. The verification that $h$ is a homeomorphism is
straightforward. 

Applying this to $X=E\cC$ we see that $X^{\Delta}$ is always either
$S^0$ or contractible. The collection of subgroups for which it is
$S^0$ is obviously a cofamily. 





\end{proof}

If we write $C(\cC, n)$ for the geometric
isotropy of $E\cC^{\sm n}$, then by the lemma $E\cC^{\sm n}\simeq EC(\cC,n)$.

\begin{lemma}
\label{lem:CCnC1}
$$C(\cC,n) \subseteq \pi^* \cC$$
\end{lemma}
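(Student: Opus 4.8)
The plan is to read this off directly from the fixed-point homeomorphism established in the proof of the previous lemma. For a subgroup $\Delta \subseteq \GS$, with $o_1, \ldots, o_s$ the orbits of $\{1, \ldots, n\}$ under $p(\Delta)$, representatives $d_i \in o_i$, and $\Delta_i = p^{-1}((\Sigma_n)_{d_i}) \cap \Delta$ the $\Delta$-stabilizer of $d_i$, that lemma gives
$$
(E\cC^{\sm n})^{\Delta} \cong \bigwedge_{i=1}^s (E\cC)^{\pi(\Delta_i)} .
$$
Each factor $(E\cC)^{\pi(\Delta_i)}$ is $S^0$ when $\pi(\Delta_i) \in \cC$ and contractible otherwise, and a finite smash product is $S^0$ precisely when every smash factor is. Hence $\Delta$ lies in the geometric isotropy $C(\cC, n)$ exactly when $\pi(\Delta_i) \in \cC$ for every $i$.

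First I would fix $\Delta \in C(\cC, n)$ and extract from the displayed equivalence that $\pi(\Delta_1) \in \cC$ for the first orbit (any single orbit suffices). The remaining step is to pass from the stabilizer to the whole group: since $\Delta_1 = p^{-1}((\Sigma_n)_{d_1}) \cap \Delta$ is a subgroup of $\Delta$, its image satisfies $\pi(\Delta_1) \subseteq \pi(\Delta)$. The crux is then the defining closure property of a cofamily, namely that $\cC$ is closed under passage to overgroups: applied to $\pi(\Delta_1) \subseteq \pi(\Delta)$ with $\pi(\Delta_1) \in \cC$, this yields $\pi(\Delta) \in \cC$, which is exactly $\Delta \in \pi^* \cC$.

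I expect no serious obstacle here, since all the geometric work was done in the previous lemma; the only points requiring care are that $(E\cC)^{\pi(\Delta_i)}$ is couniversal precisely when $\pi(\Delta_i) \in \cC$ (the defining property of $E\cC$), and that smashing distinguishes $S^0$ from contractible spaces. It is worth emphasising that the argument uses only a single stabilizer subgroup together with the upward closure of $\cC$, and in particular needs no finer control of the relationship between $\pi(\Delta)$ and the various $\pi(\Delta_i)$.
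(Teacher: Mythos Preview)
Your argument is correct. It differs from the paper's in a mildly interesting way: the paper argues the contrapositive directly, by observing that if $\pi(\Delta)\notin\cC$ then $(E\cC)^{\pi(\Delta)}$ is a contractible space strictly containing $S^0$, so one can pick a non-basepoint $x\notin S^0$ fixed by $\pi(\Delta)$ and note that the diagonal element $x\sm x\sm\cdots\sm x$ is fixed by $\pi(\Delta)\times\Sigma_n\supseteq\Delta$ and lies outside $S^0\subset E\cC^{\sm n}$. That argument never invokes the upward-closure of $\cC$; it simply produces an explicit witness. Your route instead feeds the full fixed-point decomposition from the previous lemma back in, obtaining the sharper description $C(\cC,n)=\{\Delta:\pi(\Delta_i)\in\cC\text{ for all }i\}$, and then uses the cofamily property to pass from the stabilizer image $\pi(\Delta_1)$ up to $\pi(\Delta)$. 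Your approach extracts more structural information (the exact description of $C(\cC,n)$) at the cost of invoking an extra hypothesis; the paper's is more self-contained but less informative. Either is perfectly adequate here.
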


\begin{proof}
We show that if $\Delta$ is not in the right hand side it is not in
the left hand side. 

If $\pi (\Delta) $ does not lie in $\cC$,  then $(E\cC)^{\pi
  (\Delta)}\neq S^0$. Suppose then that $x=x(1)\in E\cC \setminus S^0$
  is a non-trivial element of $\cC$ fixed by $\pi (\Delta)$. Now write $x(i)=\sigma^i x(1)$ where
  $\sigma =(123\cdots n)$.  We then have $x =x(1)\sm w(2)\sm \cdots
  \sm w(n)$ fixed by $\pi (\Delta )\times \Sigma_n$ and hence by its
  subgroup $\Delta$. Hence $\Delta \not \in C(\cC , n)$. 
\end{proof}

\begin{lemma}
\label{lem:CC1Cn}
If $\cC$ is closed under passage to finite index subgroups then 
$$\pi^*\cC \cap \cF_G(n) \subseteq C(\cC ,n)$$
\end{lemma}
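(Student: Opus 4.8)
The plan is to feed the homeomorphism of the preceding lemma into the closure hypothesis on $\cC$. Fix a graph subgroup $\Delta \in \pi^*\cC \cap \cF_G(n)$. Since $\Delta \cap \Sigma_n = 1$ we may write $\Delta = \Gamma(L, \alpha)$ with $L = \pi(\Delta) \subseteq G$ and $\alpha : L \lra \Sigma_n$ a homomorphism, and then $p(\Delta) = \alpha(L)$. The hypothesis $\Delta \in \pi^*\cC$ says exactly that $L \in \cC$, and the goal is to show $(E\cC^{\sm n})^\Delta = S^0$, i.e.\ that $\Delta \in C(\cC, n)$.

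First I would invoke the homeomorphism of the previous lemma with $X = E\cC$, which gives
$$(E\cC^{\sm n})^\Delta \cong \bigwedge_{i=1}^s (E\cC)^{\pi(\Delta_i)},$$
where $o_1, \ldots, o_s$ are the orbits of $\{1, \ldots, n\}$ under $p(\Delta)$, the $d_i \in o_i$ are representatives, and $\Delta_i$ is the stabilizer of $d_i$ in $\Delta$. Because $E\cC$ has geometric isotropy $\cC$, each factor $(E\cC)^{\pi(\Delta_i)}$ equals $S^0$ when $\pi(\Delta_i) \in \cC$ and is contractible otherwise. A smash of copies of $S^0$ is again $S^0$, whereas a smash involving any contractible factor is contractible; hence the left-hand side is $S^0$ precisely when $\pi(\Delta_i) \in \cC$ for every $i$. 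This reduces the lemma to showing each $\pi(\Delta_i)$ lies in $\cC$.

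The decisive point is a finite-index count. Under the graph description one reads off $\pi(\Delta_i) = L_i := \{x \in L \st \alpha(x)(d_i) = d_i\} = \alpha^{-1}\big(\alpha(L) \cap (\Sigma_n)_{d_i}\big)$, so that $[L : L_i] = |o_i| \leq n$ by orbit--stabilizer applied to the finite group $\alpha(L)$ acting on $\{1, \ldots, n\}$. Thus $L_i$ is a finite-index subgroup of $L \in \cC$, and the hypothesis that $\cC$ is closed under passage to finite-index subgroups yields $L_i \in \cC$. As this holds for all $i$, we conclude $(E\cC^{\sm n})^\Delta = S^0$, hence $\Delta \in C(\cC, n)$.

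There is no serious obstacle here; the only thing to handle with care is the index bookkeeping. The role of the graph hypothesis $\cF_G(n)$ is precisely to make $\pi$ injective on $\Delta$, so that $\pi(\Delta_i)$ is transparently the stabilizer subgroup $L_i \subseteq L$ and the finiteness of the orbit $o_i$ directly bounds $[L : L_i]$ — which is exactly the input the closure hypothesis consumes. Combined with the opposite inclusion of Lemma \ref{lem:CCnC1}, this identifies $C(\cC, n)$ on graph subgroups as $\pi^*\cC \cap \cF_G(n)$.
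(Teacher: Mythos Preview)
Your proof is correct, but it takes a genuinely different route from the paper's. The paper does not invoke the orbit decomposition $h$ from the preceding lemma at all. Instead it exploits that $C(\cC,n)$ is a cofamily: since $\Delta'=\Gamma(L_e,\alpha|_{L_e})\subseteq \Delta$, it suffices to show $\Delta'\in C(\cC,n)$. Because $\Sigma_n$ is discrete, $\alpha|_{L_e}$ is trivial, so $\Delta'=L_e\times 1$ and one computes directly $(E\cC^{\sm n})^{\Delta'}=((E\cC)^{L_e})^{\sm n}=S^0$, using that the identity component $L_e$ has finite index in $L\in\cC$. Your argument, by contrast, feeds the homeomorphism $h$ back in and identifies each $\pi(\Delta_i)$ as the point-stabilizer $L_i\subseteq L$ of index $|o_i|$, applying the closure hypothesis factor by factor. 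The paper's route is slicker and leans on the compact Lie hypothesis (for $[L:L_e]<\infty$); yours is more hands-on, makes fuller use of the preceding lemma, and would go through verbatim for an arbitrary topological group $G$.
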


\begin{proof}
Suppose $\Delta\subseteq \GS$ lies in the intersection, which is to
say  $L:=\pi (\Delta )\in \cC$, and  $\Delta =\Gamma (L, \alpha)$ is a graph
subgroup. We will show that $\Delta \in C(\cC,n)$. Since $C(\cC , n)$
is a cofamily, it suffices to show that the subgroup
$\Delta'=\Gamma(L_e, \alpha|_{L_e})$ lies in $C(\cC,n)$, where $L_e$ is the
identity component of $L$. 

However, since $\Sigma_n$ is discrete $\alpha|_{L_e}$ is trivial, 
so that $\Delta'=\Gamma (L_e, const)=L_e$. However $L=\pi(\Delta)$ lies in
$\cC$, so its finite index subgroup $L_e$ also lies in
$\cC$ by hypothesis:
$$(E\cC^{\sm n})^{\Delta}\subseteq (E\cC^{\sm n})^{\Delta'}=
(E\cC^{L_e})^{\sm n})=(S^0)^{\sm n}=S^0. $$
Hence $(E\cC^{\sm n})^{\Delta}=S^0$ as required. 
\end{proof}

\begin{lemma}
\label{lem:inFGn}
The  map 
$$i_n^*: \cE_{E\cC}(n)=\map_*(E\cC^n , E\cC)\lra \map_*(S^0, 
E\cC)=E\cC$$
is an $\cF_G(n)$-equivalence.
\end{lemma}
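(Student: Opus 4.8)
The plan is to verify the condition one fixed-point set at a time, using Lemma \ref{lem:easyendo} in the group $\GS$ after identifying both sides as couniversal $\GS$-spaces. First I would record the homotopy types. Since $E\cC^{\sm n}$ is couniversal we have $E\cC^{\sm n}\simeq EC(\cC,n)$ as $\GS$-spaces, so the source is of the form treated in Lemma \ref{lem:easyendo} with source cofamily $C(\cC,n)$. The target $E\cC$ is regarded as a $\GS$-space through the projection $\pi:\GS\lra G$, and as $(E\cC)^\Delta=(E\cC)^{\pi(\Delta)}$ equals $S^0$ exactly when $\pi(\Delta)\in\cC$, it is couniversal with cofamily $\pi^*\cC=\{\Delta\st\pi(\Delta)\in\cC\}$; thus $E\cC=E(\pi^*\cC)$ as a $\GS$-space, and $\map_*(S^0,E\cC)=E\cC$ is the target of $i_n^*$. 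Combining Lemma \ref{lem:CCnC1} and Lemma \ref{lem:CC1Cn} (the latter using that $\cC$ is closed under passage to finite index subgroups) gives the key identity $C(\cC,n)\cap\cF_G(n)=\pi^*\cC\cap\cF_G(n)$.

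Being an $\cF_G(n)$-equivalence means inducing a weak equivalence on $\Delta$-fixed points for every graph subgroup $\Delta\in\cF_G(n)$, so I would fix such a $\Delta$ and put $L=\pi(\Delta)$, splitting into cases on whether $L\in\cC$. If $L\notin\cC$, then $\Delta\notin\pi^*\cC$, so the first bullet of Lemma \ref{lem:easyendo} (with source cofamily $C(\cC,n)$ and target $\pi^*\cC$) makes the source $\Delta$-contractible, while the target $(E\cC)^\Delta=(E\cC)^L$ is itself contractible; any map between contractible spaces is a weak equivalence.

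For the case $L\in\cC$ I would invoke the second bullet of Lemma \ref{lem:easyendo}, whose proof shows precisely that the restriction map along $S^0\lra E\cC^{\sm n}$, namely $i_n^*$ on $\Delta$-fixed points, is the equivalence $S^0\to S^0$. The hypotheses needed are that $\Delta\in C(\cC,n)\cap\pi^*\cC$ and that no subgroup of $\Delta$ lies in $\pi^*\cC\setminus C(\cC,n)$. Both follow from the key identity once one observes that every subgroup $K\subseteq\Delta$ is again a graph subgroup, since $K\cap\Sigma_n\subseteq\Delta\cap\Sigma_n=1$, so $K\in\cF_G(n)$: indeed $\Delta\in\pi^*\cC\cap\cF_G(n)=C(\cC,n)\cap\cF_G(n)$ gives $\Delta\in C(\cC,n)$, and $\pi^*\cC\setminus C(\cC,n)$ contains no element of $\cF_G(n)$ at all, hence no subgroup of $\Delta$. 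The second bullet then delivers the equivalence on $\Delta$-fixed points.

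Since both cases give a weak equivalence on $\Delta$-fixed points and $\Delta\in\cF_G(n)$ was arbitrary, $i_n^*$ is an $\cF_G(n)$-equivalence. I expect the only genuine obstacle to be the bookkeeping of the two cofamilies under $\pi$: one must apply the couniversal (second) bullet of Lemma \ref{lem:easyendo} to all subgroups of $\Delta$ rather than to $\Delta$ alone, and check that these remain graph subgroups so that the combination of Lemmas \ref{lem:CCnC1} and \ref{lem:CC1Cn} identifies $C(\cC,n)$ and $\pi^*\cC$ on $\cF_G(n)$.
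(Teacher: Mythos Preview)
Your proposal is correct and follows essentially the same route as the paper: combine Lemmas~\ref{lem:CCnC1} and~\ref{lem:CC1Cn} to see that $C(\cC,n)$ and $\pi^*\cC$ agree when restricted to any $\Delta\in\cF_G(n)$ (equivalently, your observation that subgroups of graph subgroups are again graph subgroups), and then feed this into Lemma~\ref{lem:easyendo}. Your write-up is simply a more explicit unpacking of the paper's two-line argument, including the useful remark that the hypothesis ``$\cC$ closed under finite index subgroups'' enters via Lemma~\ref{lem:CC1Cn}, which the paper leaves implicit in the statement of the lemma.
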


\begin{proof}
We observe that by Lemmas \ref{lem:CCnC1} and \ref{lem:CC1Cn},  if $H\in \cF_G(n)$ then $C(\cC,
n)|_H=\pi_*\cC|_H$. The result follows from Lemma \ref{lem:easyendo}. 
\end{proof}

\subsection{McClure's argument extended}

We now apply the above to the operad $D\cC$ of Subsection
\ref{subsec:counivoperad}. 

\begin{thm}
\label{thm:main}
If $\cC$ is a cofamily then the  space $E\cC$ is an $\EinftyG$-algebra
if and only if $\cC$ is closed under passage to finite index
subgroups. 
\end{thm}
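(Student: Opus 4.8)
The plan is to prove the two implications separately, in both cases realizing $E\cC$ as an algebra over the couniversal operad $D\cC$ of Subsection \ref{subsec:counivoperad} and exploiting the graph subgroups in $\cF_G(n)$ that distinguish $\EinftyG$ from $\Einfty$. The forward direction is a direct extension of McClure's construction using Lemma \ref{lem:inFGn}, while the reverse direction extracts a norm map from the operad action and derives a connectivity contradiction.

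\textbf{The forward implication.} Assuming $\cC$ is closed under passage to finite-index subgroups, I would show that $\EEinftyG \times D\cC$ is an $\EinftyG$-operad acting on $E\cC$, exactly paralleling McClure's use of $\EEinfty \times D_{McC}$. It acts on $E\cC$ because $D\cC$ does (Lemma \ref{lem:DCactsonEC}) and one pulls back along the projection, so it remains to check that its $n$-th term is universal for $\cF_G(n)$. For $\Gamma \notin \cF_G(n)$ this is automatic, since $\EEinftyG(n)^\Gamma = \emptyset$ forces the product to be empty. For $\Gamma \in \cF_G(n)$ we have $\EEinftyG(n)^\Gamma \simeq *$, so it suffices to see that $D\cC(n)^\Gamma$ is contractible. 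Here I invoke Lemma \ref{lem:inFGn}: since $i_n^*$ is an $\cF_G(n)$-equivalence and $D\cC(n) = (i_n^*)^{-1}(i_1)$ is the strict fibre over the distinguished $G$-fixed point $i_1$, passing to $\Gamma$-fixed points gives a weak equivalence $\cE_{E\cC}(n)^\Gamma \to (E\cC)^\Gamma$ whose fibre over $i_1$ is therefore weakly contractible, whether $(E\cC)^\Gamma = (E\cC)^{\pi(\Gamma)}$ equals $S^0$ (with $i_1$ its non-basepoint) or is contractible. Concluding that $\EEinftyG \times D\cC$ is an $\EinftyG$-operad then exhibits $E\cC$ as an $\EinftyG$-algebra.

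\textbf{The reverse implication.} Suppose $E\cC$ is an algebra over an $\EinftyG$-operad $\cO$, and let $K \le L \le G$ with $[L:K] = n < \infty$ and $L \in \cC$; I want $K \in \cC$. Let $\alpha : L \to \Sigma_n$ be the permutation action on $L/K$ and $\Gamma = \Gamma(L,\alpha) \in \cF_G(n)$ its graph. Choosing a point $c \in \cO(n)^\Gamma$ (nonempty since $\Gamma \in \cF_G(n)$) and taking $\Gamma$-fixed points of the structure map $\theta_n : \cO(n) \times (E\cC)^{\sm n} \to E\cC$ yields a norm map
$$N = \theta_n(c,-) : ((E\cC)^{\sm n})^\Gamma \lra (E\cC)^\Gamma.$$
The smash-power fixed-point homeomorphism established above, applied to the transitive $L$-set $L/K$, collapses the single orbit and identifies $((E\cC)^{\sm n})^\Gamma \cong (E\cC)^K$, while $(E\cC)^\Gamma = (E\cC)^L = S^0$. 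Thus $N$ is a based map $(E\cC)^K \to S^0$. The operadic unit of $E\cC$ is a $G$-fixed non-basepoint $u$ (it cannot be the basepoint, else unitality would force $E\cC$ to be a point), so $u = i_1$ in $(E\cC)^L = S^0$; and filling all $n$ inputs with the unit gives $N(i_1) = i_1$, the non-basepoint, while $N$ preserves basepoints. Hence $N$ takes both values of $S^0$. But if $K \notin \cC$ then $(E\cC)^K$ is contractible, hence connected, and a map from a connected space to the discrete space $S^0$ is constant --- a contradiction. Therefore $K \in \cC$, and $\cC$ is closed under passage to finite-index subgroups.

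\textbf{The main obstacle.} I expect the reverse implication to be the delicate part: everything turns on extracting the genuine multiplicative norm from the bare operad action at the graph subgroup $\Gamma(L,\alpha)$ and verifying that it is unital, i.e. that the unit of the algebra is the non-basepoint $i_1$ and that $N(i_1)=i_1$. This is exactly where the extra rigidity of $\EinftyG$ over $\Einfty$ enters, since $\cF_G(n)$ supplies precisely the graph subgroups that witness norms. A secondary, purely point-set, issue in the forward implication is the assertion that $i_n^*$ is a fibration, so that the strict fibre $D\cC(n)$ computes the homotopy fibre; this should follow from $S^0 \hookrightarrow (E\cC)^{\sm n}$ being a $\GS$-cofibration, mirroring the analogous step in McClure's original argument.
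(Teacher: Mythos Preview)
Your forward implication is essentially identical to the paper's: both form $\EEinftyG \times D\cC$, invoke Lemma~\ref{lem:inFGn} to see that $D\cC(n)$ is $\cF_G(n)$-contractible, and conclude. The extra case-splitting you supply (and the remark about the fibration issue) only makes explicit what the paper leaves implicit.

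Your reverse implication is correct but genuinely different from the paper's. The paper passes to suspension spectra and argues on $\pi_0$: since $K\notin\cC$ one has $\pi_0^K(E\cC)=0$, while the Segal--tom Dieck splitting shows $\pi_0^H(E\cC)\neq 0$; the multiplicative norm $\mathrm{norm}_K^H$ attached to an $\EinftyG$-ring then sends $1$ to $1$, contradicting $1=0$ in the source. You instead stay entirely at the space level, manufacturing the norm by hand from a point of $\cO(n)^{\Gamma(L,\alpha)}$ and the fixed-point decomposition of $(E\cC)^{\sm n}$, and derive a contradiction from connectivity of $(E\cC)^K$. Your route is more elementary and self-contained---it avoids both tom Dieck splitting and the general machinery identifying $\EinftyG$-structures with HHR-style norms---at the cost of the small verifications you flag (nondegeneracy of the unit and $N(i_1)=i_1$, which do follow once one uses that $\cO(0)$ is $G$-contractible). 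The paper's route is terser and connects the obstruction directly to the norm maps that motivate the whole discussion.
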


\begin{proof}
If there is a finite index inclusion  $K\subseteq H$ of subgroups with
$H \in \cC$ and $K\not\in \cC$, then the assumption that $E\cC$ is
$\EinftyG$ leads to a contradiction. Indeed $\pi^K_0(E\cC)=0$ so that
$1=0$ in that ring. On the other hand, by Segal-tom Dieck splitting, 
$\pi^H_0(E\cC)\neq 0$ so that $1\neq 0$ in $\pi^H_0(E\cC)$. 
The existence of a norm map then gives a
contradiction since $\mathrm{norm}_K^H(1)=1$. 

Now suppose $\cC$ is closed under passage to finite index subgroups. 
By Lemma \ref{lem:DCactsonEC} there is an action of $D\cC$ on
$E\cC$, and hence also an action of  $\EEinftyG \times D\cC$. 
It remains to show that the $n$th term in this operad is universal for
$\cF_G(n)$. In other words, we need to show that if $\Gamma\in \cF_G(n)$
is a graph subgroup then $D\cC(n)^{\Gamma}\simeq *$.

Now by Lemma \ref{lem:inFGn},  the  map 
$$i_n^*: \cE_{E\cC}(n)=\map_*(E\cC^n , E\cC)\lra \map_*(S^0, 
E\cC)=E\cC$$
is an $\cF_G(n)$-equivalence, and hence 
$D\cC (n)$ is $\cF_G(n)$-contractible as required. 
\end{proof}

\begin{cor}
\label{cor:main}
If $G$ is a torus and $K$ is a connected subgroup then $S^{\infty
  V(K)}=\bigcup_{V^K=0}S^V$ is an $\EinftyG$-algebra. 
\end{cor}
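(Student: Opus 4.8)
The plan is to realise $S^{\infty V(K)}$ as (a model for) a couniversal space $E\cC$, so that the corollary reduces via Theorem \ref{thm:main} to two things: identifying the geometric isotropy $\cC$, and checking that $\cC$ is closed under passage to finite index subgroups.

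First I would compute fixed points. Since $(S^V)^H=S^{V^H}$ and fixed points commute with the filtered union, we have $(S^{\infty V(K)})^H=\bigcup_{V^K=0}S^{V^H}$. If some $V$ with $V^K=0$ has $V^H\neq 0$, then replacing $V$ by its multiples $nV$ (which still satisfy $(nV)^K=0$) makes $V^H$ grow without bound, so the union is an infinite-dimensional sphere and hence contractible; otherwise it is $S^0$. Thus $S^{\infty V(K)}$ is couniversal, with geometric isotropy $\cC=\{H : \chi|_K\neq 1 \Rightarrow \chi|_H\neq 1 \text{ for every character } \chi \text{ of } G\}$. Here I would use that $G$ is a torus, so closed subgroups are the common kernels of the characters vanishing on them: the defining condition is the contrapositive statement that $\chi|_H=1\Rightarrow\chi|_K=1$ for all $\chi$, i.e. $\widehat{G/H}\subseteq\widehat{G/K}$, which for a torus is exactly $K\subseteq H$. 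Hence $\cC=\{H : K\subseteq H\}$, the cofamily of subgroups containing $K$ (upward closure being immediate).

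Next I would verify that this $\cC$ is closed under passage to finite index subgroups, and this is the one place connectedness of $K$ enters. If $K\subseteq H$ and $H'\leq H$ has finite index, then $H$ and $H'$ share an identity component, $H_e=H'_e$. Since $K$ is connected and contains the identity, $K\subseteq H$ forces $K\subseteq H_e=H'_e\subseteq H'$, so $H'\in\cC$. Theorem \ref{thm:main} then applies to give that $E\cC=S^{\infty V(K)}$ is an $\EinftyG$-algebra.

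I expect the main obstacle to be the first step: both checking that every fixed point set is genuinely either $S^0$ or contractible (so the space really is couniversal in the sense of the earlier subsections) and pinning the cofamily down as precisely $\{H:K\subseteq H\}$ through the character theory of the torus. By contrast the hypothesis actually fed into Theorem \ref{thm:main}, closure under finite index, is an immediate consequence of $K$ being connected, and indeed the example of a nontrivial finite $K$ shows this closure can fail without connectedness.
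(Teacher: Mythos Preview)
Your proposal is correct and follows essentially the same approach as the paper: identify $S^{\infty V(K)}$ as the couniversal space for the cofamily $\up(K)=\{H:K\subseteq H\}$, observe that connectedness of $K$ makes this cofamily closed under finite index subgroups, and invoke Theorem~\ref{thm:main}. The paper simply asserts the identification of the cofamily and the finite-index closure, whereas you supply the fixed-point and character-theory arguments in detail, but the strategy is the same.
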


\begin{proof}
The space $S^{\infty V(K)}$ is couniversal for the cofamily $\up
(K)=\{H \st H\supseteq K\}$ of subgroups containing $K$. Since $K$ is
connected $\up (K)$ is closed under passage to finite index
subgroups. 
\end{proof}

\end{document}